\theoremstyle{plain} 
\newtheorem{thm}{Theorem}
\newtheorem{cor}{Corollary}
\theoremstyle{definition}
\theoremstyle{remark} 
\newtheorem*{astep}{A-step}
\newtheorem*{pstep}{P-step}
\newtheorem*{cstep}{C-step}
\newcommand{\prob}{\mathsf{P}}
\newcommand{\bel}{\mathsf{bel}}
\newcommand{\pl}{\mathsf{pl}}
\newcommand{\pval}{\mathsf{pval}}
\newcommand{\bin}{{\sf Bin}}
\newcommand{\unif}{{\sf Unif}}
\newcommand{\nm}{{\sf N}}
\newcommand{\chisq}{{\sf ChiSq}}
\newcommand{\RR}{\mathbb{R}}
\newcommand{\U}{\mathscr{U}}
\newcommand{\XX}{\mathbb{X}}
\newcommand{\UU}{\mathbb{U}}
\newcommand{\TT}{\mathbb{T}}
\newcommand{\Xbar}{\bar X}
\newcommand{\Ubar}{\overline{U}}
\DeclareMathOperator{\cl}{cl}
\renewcommand{\S}{\mathcal{S}}
\renewcommand{\SS}{\mathbb{S}}
\renewcommand{\phi}{\varphi} 
\newcommand{\eps}{\varepsilon}
\title{A note on p-values interpreted as plausibilities}
\author{
Ryan Martin \\
Department of Mathematics, Statistics, and Computer Science \\
University of Illinois at Chicago \\
\url{rgmartin@uic.edu} \\
\mbox{} \\
Chuanhai Liu \\
Department of Statistics \\
Purdue University \\
\url{chuanhai@purdue.edu}
}
\date{\today}
\begin{document}

\maketitle 

\begin{abstract}  
P-values are a mainstay in statistics but are often misinterpreted.  We propose a new interpretation of p-value as a meaningful plausibility, where this is to be interpreted formally within the inferential model framework.  We show that, for most practical hypothesis testing problems, there exists an inferential model such that the corresponding plausibility function, evaluated at the null hypothesis, is exactly the p-value.  The advantages of this representation are that the notion of plausibility is consistent with the way practitioners use and interpret p-values, and the plausibility calculation avoids the troublesome conditioning on the truthfulness of the null.  This connection with plausibilities also reveals a shortcoming of standard p-values in problems with non-trivial parameter constraints.   

\smallskip

\emph{Keywords and phrases:} Hypothesis test; inferential model; nesting; plausibility function; predictive random set.
\end{abstract}

\section{Introduction}
\label{S:intro}

P-values are ubiquitous in applied statistics, but are widely misinterpreted as either a sort of Bayesian posterior probability that the null hypothesis is true, or as a frequentist error probability.  Indeed, in 2012, media were reporting the discovery of the elusive Higgs boson particle \citep{overbye2012} and statistics blogs were pointing out how some journalists and physicists had misinterpreted the resulting p-values.  Our objective here is to provide a new and simpler way to understand them, so that these misinterpretations might be avoided. 

A prime reason for the frequent misinterpretation of p-values is that the standard textbook definition is inconsistent with people's common sense.  The goal of this paper is to provide a more user-friendly interpretation.  We show that the p-value can be interpreted as a plausibility that the null hypothesis is true.  This ``plausibility'' is precisely defined within the inferential model (IM) framework of \citet{imbasics}, built upon two fundamental principles of \citet{sts.discuss.2014} for valid and efficient statistical inference.  Consider the problem of testing a null hypothesis $H_0$ versus a global alternative $H_1$.  We show that, under mild conditions, for any p-value (depending on $H_0$ and the choice of test statistic), there exists a valid IM such that the plausibility of $H_0$ is the p-value.  In this sense, the p-value can be understood as the plausibility, given the observed data, that $H_0$ is true.  In the Higgs boson report, since the p-value is minuscule ($p \ll 10^{-6}$) one concludes that the hypothesis $H_0: \text{``the Higgs boson does not exist''}$ is highly implausible, hence, a discovery.  This line of reasoning based on small p-values is consistent with Cournot's principle \citep[e.g.,][]{shafer.vovk.2006}.   

The word ``plausibility'' fits the way practitioners use and interpret p-values: a small p-value means $H_0$ is implausible, given the observed data.  Evaluating plausibility involves a probability calculation that does not require one to assume that $H_0$ is true, so one avoids the questionable logic of proving $H_0$ false by using a calculation that assumes it is true.  The use of IMs to provide probabilistic interpretations of classically non-probabilistic summaries is proving to be beneficial; see, for example, \citet{randset}.  

The remainder of the paper is organized as follows.  Section~\ref{S:fisher} sets up our notation and gives the formal definition of p-value, with a brief discussion of its common correct and incorrect interpretations.  The basics of IMs are introduced in Section~\ref{S:infmodel}, in particular, predictive random sets and plausibility functions.  In Section~\ref{S:pval} we prove that, given essentially any hypothesis testing problem, there is a valid IM such that the corresponding plausibility function, evaluated at the null hypothesis, is the p-value.  There we highlight a similar connection between the IM plausibilities and objective Bayes posterior probabilities, and an apparently unrecognized shortcoming of p-values in problems with non-trivial parameter constraints.  Two examples involving binomial and normal data are presented in Sections~\ref{SS:binomial}--\ref{SS:normal}, and some concluding remarks are given in Section~\ref{S:discuss}.

\section{The p-value}
\label{S:fisher}

\subsection{Setup and formal definition}
\label{SS:setup}

Let $X$ denote observable data, taking values in $\XX$.  There is a sampling model $\prob_{X|\theta}$, indexed by a parameter $\theta \in \Theta$, and the goal is to make inference on $\theta$ using the observed data $X=x$.  Here both $X$ and $\theta$ are allowed to be vector-valued, but do will not make this explicit in the notation.  The hypothesis testing problem starts with a hypothesis, or assertion, about the unknown $\theta$.  Mathematically, this is characterized by a subset $\Theta_0 \subset \Theta$, and we write $H_0: \theta \in \Theta_0$ for the null hypothesis and $H_1: \theta \not\in \Theta_0$ for the alternative hypothesis.  The goal is to use observed data $X=x$ to determine, with some measure of certainty, whether $H_0$ or $H_1$ is true.  

Consider the description of the p-value given by \citet[][p.~39]{fisher1959}, viewed as follows.  If the observed $X=x$ gives small p-value, then one of two things occurred: relative to $H_0$, a rare chance event has occurred, \emph{or} $H_0$ is false.  The unlikeliness of the former drives us to conclude the latter.  To put this in more standard terms, suppose there is a test statistic $T: \XX \to \RR$, possibly depending on $\Theta_0$, such that large values of $T(X)$ suggest that $H_0$ may not be true.  The p-value is defined, for $X=x$, as
\begin{equation}
\label{eq:pval}
\pval(x) = \pval_{T,\Theta_0}(x) = \sup_{\theta \in \Theta_0} \prob_{X|\theta}\{T(X) \geq T(x)\}. 
\end{equation}
When $\Theta_0 = \{\theta_0\}$, a point null, \eqref{eq:pval} simplifies to $\pval(x) = \prob_{X|\theta_0}\{T(X) \geq T(x)\}$, the expression found in most introductory textbooks.   

Intuitively, $\pval(x)$ compares the observed $T(x)$ to the sampling distribution of $T(X)$ when $H_0$ is true.  If $\pval(x)$ is small, then $T(x)$ is an outlier under $H_0$ and we conclude that $H_0$ is implausible.  Conversely, if $\pval(x)$ is relatively large, then the observed $T(x)$ is consistent with at least one $\prob_{X|\theta}$, with $\theta \in \Theta_0$, so $H_0$ is plausible in the sense that it provides an acceptable explanation of reality.

\subsection{Standard interpretations}

Standard textbooks have adopted an equivalent though arguably more obscure interpretation.  The standard textbook interpretation of p-value goes something like this: 

\begin{quote}
$\pval(x)$ is the probability that an observable $X$ is ``at least as extreme'' as the $x$ actually observed, assuming $H_0$ is true.
\end{quote}

This leads to the common misinterpretation of p-value as a sort of Bayesian posterior probability of $H_0$.  \citet[][Sec.~3.3]{lehmann.romano.2005}, after laying out the details of the Neyman--Pearson testing program, have it that

\begin{quote}
$\pval(x)$ is the greatest lower bound on the set of all $\alpha$ such that the size-$\alpha$ test rejects $H_0$ based on $T(x)$.
\end{quote}
A danger here is that the conditioning on $H_0$ is hidden in the definition of size; users can potentially misinterpret $\pval(x)$ as the probability of incorrectly rejecting $H_0$ based on $x$.  

Some statisticians have abandoned the use of p-values, advocating for other tools for measuring evidence supporting $H_0$ and/or testing $H_0$, such as confidence intervals; see, e.g., \citet[][Sec.~4.3]{berger.delampady.1987} and the discussion by G.~Casella and R.~Berger on that same paper.  This preference for confidence intervals is fairly common in medical, social, and other applied sciences.  Some journals, such as the \emph{American Journal of Public Health}, have made concerted efforts to get authors to use confidence intervals rather than p-values \citep{fidler.etal.2004}.  Still, confidence intervals are not free of their own difficulties, nor are Bayes factors \citep{kass.raftery.1995} or Bayesian p-values \citep{gelman.meng.stern.1996, rubin1984}.  We think a better or simpler way to understand the ubiquitous p-value is a valuable contribution.

\section{Review of inferential models}
\label{S:infmodel}

\subsection{Big picture}
\label{S:picture}

The inferential model (IM) framework produces exact prior-free probabilistic measures of evidence for/against any assertion about the unknown parameter; see \citet{imbasics}, \citet{mzl2010}, and \citet{zl2010}.  This is accomplished by first making an explicit association between the observable data $X$, the unknown parameter $\theta$, and an unobservable auxiliary variable $U$.    Random sets are introduced to predict the unobservable $U$, and inference about $\theta$ is obtained via probability calculations with respect to the distribution of this random set.  The IM framework has some connections with existing approaches, such as fiducial \citep{hannig2009, hannig.lee.2009, hannig2012}, confidence distributions \citep{xie.singh.strawderman.2011, xie.singh.2012}, Dempster--Shafer theory \citep{dempster2008, shafer1976, shafer2011}, generalized p-values and confidence intervals \citep{tsui.weerahandi.1989, weerahandi1993, chiang2001}, and Bayesian inference with default, reference, and/or data-dependent priors \citep{berger2006, bergerbernardosun2009, mghosh2011, fraser2011, fraser.reid.marras.yi.2010}.  

IMs, fiducial, and Dempster--Shafer theory all introduce auxiliary variables into the inference problem.  Both fiducial and Dempster--Shafer theory condition on the observed $X=x$, then develop a sort of distribution on the parameter space by inverting the data--parameter--auxiliary variable relationship and assuming that $U$ retains its \emph{a priori} distribution after $X=x$ is observed.  The IM approach targets the (unattainable) best possible inference corresponding to the case where $U$ is observed.  Uncertainty about $\theta$, after $X=x$ is observed, is propagated from the uncertainty about hitting the true $U$ with a random set.  In addition to accomplishing Fisher's goal of prior-free probabilistic inference, IMs produce inferential output which is valid for any assertion of interest (Section~\ref{SS:valid}); fiducial probabilities are valid only for special kinds of assertions \citep[][Sec.~4.3.1]{imbasics}.  Moreover, a general theory of optimal IMs, concerning efficiency of the resulting inference, may not be out of reach.

\subsection{Construction}
\label{SS:construct}

Following \citet{imbasics}, the IM construction proceeds in three steps.   

\begin{astep}
This proceeds by specifying an {association} between $X$, $\theta$, and $U$.  Like fiducial, Dempster--Shafer, and Fraser's structural models \citep{fraser1968}, this can be described by a pair $(\prob_U, a)$, where $\prob_U$ describes the distribution (and also, implicitly, the support $\UU$) of the auxiliary variable $U$, and $a$ describes the data-generating mechanism driven by $\prob_U$.  We write this as 
\[ X = a(\theta, U), \quad \text{with} \quad U \sim \prob_U. \]
That is, if $U$ is sampled from $\prob_U$ and then plugged in to the function $a$ for a given $\theta$, then the resulting $X$ has distribution $\prob_{X|\theta}$.  The association need not be described by a formal equation---it is enough to have a rule/recipe to construct $X$ from a given $\theta$ and $U$; see e.g., Section~\ref{SS:binomial}.  To complete the A-step, construct a sequence of subsets of $\Theta$ indexed by $(x,u)$:
\begin{equation}
\label{eq:focal1}
\Theta_x(u) = \{\theta: x=a(\theta,u)\}. 
\end{equation} 
\end{astep}

\begin{pstep}
Based on the idea that knowing the unobserved value of $U$ is ``as good as'' knowing $\theta$ itself, the goal of the {prediction} step is to predict this unobserved value with a predictive random set, denoted by $\S$.  Certain assumptions are required on the support $\SS$ and distribution $\prob_\S$ of the predictive random set; see Section~\ref{SS:valid}.  
\end{pstep}

\begin{cstep}
This step combines the observed $X=x$, which specifies a sub-collection of sets $\Theta_x(\cdot)$ in \eqref{eq:focal1}, with the predictive random set $\S$.  The result is an $x$-dependent random subset of $\Theta$:
\begin{equation}
\label{eq:focal2}
\Theta_x(\S) = \textstyle\bigcup_{u \in \S} \Theta_x(u).  
\end{equation}
Evidence for/against an assertion $A \subseteq \Theta$ concerning the unknown parameter can now be obtained via the $\prob_\S$-probability that $\Theta_x(\S)$ is/is not a subset of $A$.  More precisely, we evaluate 
\begin{equation}
\label{eq:belief}
\bel_x(\cdot;\S) = \prob_\S\{\Theta_x(\S) \subseteq \cdot\},
\end{equation}
the belief function, at both $A$ and $A^c$, as a summary of evidence for and against $A$, respectively.  Alternatively, we can report $\bel_x(A;\S)$ together with  
\begin{equation}
\label{eq:plaus.fun}
\pl_x(A;\S) = \prob_\S\{\Theta_x(\S) \cap A \neq \varnothing\} = 1-\bel_x(A^c;\S), 
\end{equation}
the plausibility function at $A$.  It can be readily shown that $\bel_x(A;\S) \leq \pl_x(A;\S)$ for any $A$ and any $\S$.  Then, as described briefly below, the pair $\{ \bel_x(\cdot;\S), \pl_x(\cdot;\S) \}$ is used for inference about $\theta$; see \citet{imbasics} for details.  
\end{cstep}

Statistical inference based on the IM output focuses on the relative magnitudes of $\bel_x(A;\S)$ and $\pl_x(A;\S)$.  An assertion $A$ is deemed true (resp.~untrue), given $X=x$, if both $\bel_x(A;\S)$ and $\pl_x(A;\S)$ are large (resp.~small).  Conversely, if $\bel_x(A;\S)$ is small and $\pl_x(A;\S)$ is large, then there is no clear decision to be made about the truthfulness of $A$, given $X=x$, so maybe one needs more data.  The definition of ``small'' and ``large'' values of belief/plausibility functions are specified by the theoretical validity properties discussed in Section~\ref{SS:valid}.  

One can also construct frequentist procedures based on plausibility functions.  For example, for $\alpha \in (0,1)$, a $100(1-\alpha)$\% plausibility region for $\theta$ is defined as 
\begin{equation}
\label{eq:plaus.region}
\Pi_\alpha(x) = \{\theta: \pl_x(\theta; \S) > \alpha\}. 
\end{equation}
It is a consequence of Theorem~\ref{thm:valid} below that this region achieves the nominal $1-\alpha$ frequentist coverage probability.    

Throughout it is assumed that $\Theta_x(u)$ in \eqref{eq:focal1} satisfies $\Theta_x(u) \neq \varnothing$ for all $(x,u)$ pairs.  This boils down to there being no non-trivial constraints on the parameter space $\Theta$.  When this fails, one can usually take a dimension-reduction step, described in \citet{imcond}, to transform to a problem where this assumption holds.  Under this condition, it is sometimes convenient to evaluate the plausibility on the $u$-space as opposed to the $\theta$-space as in \eqref{eq:plaus.fun}.  Given $x$ and $A$, let 
\begin{equation}
\label{eq:a.event}
\UU_x(A) = \cl\{u: \Theta_x(u) \subseteq A\},
\end{equation}
where $\cl B$ denotes the closure of the set $B$.  If $\Theta_x(u) \neq \varnothing$ for all $(x,u)$, as we have assumed, then belief and plausibility can be evaluated on the $u$-space as 
\begin{equation}
\label{eq:plaus.fun.alt}
\bel_x(A;\S) = \prob_\S\{\S \subseteq \UU_x(A)\} \quad \text{and} \quad \pl_x(A;\S) = 1-\prob_\S\{\S \subseteq \UU_x(A^c)\}. 
\end{equation}
This formulation is used in the main result in Section~\ref{S:pval}.

\subsection{IM validity}
\label{SS:valid}

It is important that the IM's belief and plausibility functions are meaningful across similar studies.  This type of meaningfulness is referred to as validity in \citet{imbasics}.  Here, the IM is said to be valid if
\begin{equation}
\label{eq:valid2}
\sup_{\theta \in A} \prob_{X|\theta} \{\pl_X(A;\S) \leq \alpha\} \leq \alpha, \quad \forall \; A \subseteq \Theta, \quad \forall \; \alpha \in (0,1). 
\end{equation}
This means that, if $A$ is true, then $\pl_x(A;\S)$ is small for only a small proportion of possible $x$ values, ``outliers.''  Since it holds for all $A \subseteq \Theta$, a similar statement about $\bel_X(A;\S)$ can also be made.  Validity holds, without special modification, even for the scientifically important case of singleton $A$.  In fact, for reasonably chosen predictive random sets \citep[see][Corollary~1]{imbasics}, the latter ``$\leq \alpha$'' can be replaced by ``$=\alpha$;'' hence $\pl_X(A;\S) \sim \unif(0,1)$ when $A=\{\theta_0\}$ is true.  In Theorem~\ref{thm:pval} below we show that the p-value is a plausibility function at the null hypothesis.  So \eqref{eq:valid2} restates the familiar result that, if the null hypothesis is true, then the p-value is (stochastically dominated by) a $\unif(0,1)$ random variable. 

The validity property \eqref{eq:valid2} holds if the predictive random set $\S$ satisfies certain conditions, no requirements on $\prob_{X|\theta}$ or the association $(\prob_U,a)$ are needed.  Let $(\UU,\U)$ be the measurable space on which $\prob_U$ is defined, and assume that $\U$ contains all closed subsets of $\UU$.  \citet{imbasics} gives the following result.

\begin{thm}
\label{thm:valid}
The IM is valid for all assertions $A \subseteq \Theta$ if $\Theta_x(u) \neq \varnothing$ for all $(x,u)$ and the predictive random set $\S$ satisfies the following:
\vspace{-2mm}
\begin{itemize}
\item[\rm P1.] The support $\SS \subset 2^\UU$ of $\S$ contains $\varnothing$ and $\UU$, and: \\
{\rm (a)} each $S \in \SS$ is closed and, hence $\prob_U$-measurable, and \\
{\rm (b)} for any $S,S' \in \SS$, either $S \subseteq S'$ or $S' \subseteq S$.
\vspace{-2mm}
\item[\rm P2.] The distribution $\prob_\S$ of $\S$ satisfies $\prob_\S\{\S \subseteq K\} = \sup_{S \in \SS: S \subseteq K} \prob_U(S)$, $K \subseteq \UU$.  
\end{itemize}
\end{thm}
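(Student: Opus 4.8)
The plan is to verify the validity inequality \eqref{eq:valid2} directly, by transferring the event $\{\pl_X(A;\S)\le\alpha\}$ from the data space to a single, assertion-free calibration statement about the predictive random set $\S$ and the auxiliary variable $U$. The point of departure is that, along the association, the true auxiliary value is automatically ``captured'' by the focal sets.

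First I would fix an assertion $A\subseteq\Theta$, a parameter value $\theta\in A$, and $\alpha\in(0,1)$, and represent the data under $\prob_{X|\theta}$ as $X=a(\theta,U)$ with $U\sim\prob_U$. Since $X=a(\theta,U)$, the definition \eqref{eq:focal1} gives $\theta\in\Theta_X(U)$, and because $\theta\in A$ this means that any realized set $S$ of $\S$ with $U\in S$ forces $\theta\in\Theta_X(S)$ via \eqref{eq:focal2}, hence $\Theta_X(S)\cap A\neq\varnothing$. Reading this inside the $\prob_\S$ calculation \eqref{eq:plaus.fun} yields the key lower bound
\[
\pl_X(A;\S)=\prob_\S\{\Theta_X(\S)\cap A\neq\varnothing\}\ \ge\ \prob_\S\{\S\ni U\}=:h(U),
\]
where $h(u)=\prob_\S\{\S\ni u\}$ is the contour function of $\S$. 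The merit of routing the argument through \eqref{eq:plaus.fun} rather than the $u$-space form \eqref{eq:plaus.fun.alt} is that it sidesteps the closure in \eqref{eq:a.event} entirely: I never need the true $U$ to avoid $\UU_X(A^c)$, only that $U\in\S$ suffices to make $\Theta_X(\S)$ meet $A$.

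Combining this with the representation $X=a(\theta,U)$ collapses the problem to a statement that no longer mentions $A$ or $\theta$:
\[
\prob_{X|\theta}\{\pl_X(A;\S)\le\alpha\}\ \le\ \prob_{X|\theta}\{h(U)\le\alpha\}=\prob_U\{h(U)\le\alpha\},
\]
and the right-hand side is free of $\theta$, so the supremum over $\theta\in A$ in \eqref{eq:valid2} reduces to the single inequality $\prob_U\{h(U)\le\alpha\}\le\alpha$. This is the crux. To attack it I would use P2 to express the contour function as $h(u)=1-\prob_\S\{\S\subseteq\UU\setminus\{u\}\}=1-\sup\{\prob_U(S):S\in\SS,\ u\notin S\}$, so that $\{h(U)\le\alpha\}=\{\sup_{u\notin S}\prob_U(S)\ge 1-\alpha\}$. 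An $\eps$-approximation then pins the required bound on the following calibration lemma: for each $\beta\in[0,1)$, the set $J_\beta=\bigcap\{S\in\SS:\prob_U(S)>\beta\}$ satisfies $\prob_U(J_\beta)\ge\beta$.

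The main obstacle is this lemma, which is where P1 does its work. I would prove it by choosing sets $S_n\in\SS$ with $\prob_U(S_n)>\beta$ and $\prob_U(S_n)$ decreasing to $L:=\inf\{\prob_U(S):\prob_U(S)>\beta\}\ge\beta$; by the nesting condition P1(b) these may be taken decreasing, by the closedness in P1(a) they are $\prob_U$-measurable, and continuity of $\prob_U$ from above gives $\prob_U(\bigcap_n S_n)=L\ge\beta$. A short argument using P1(b) again identifies $\bigcap_n S_n$ with $J_\beta$ up to a $\prob_U$-null set, completing the lemma and hence \eqref{eq:valid2}. I expect the only genuine delicacy to lie here, in the continuity-of-measure step along a possibly uncountable nested family and in the attendant $\sup$-versus-$\max$ and null-set bookkeeping; the remainder is bookkeeping around the association. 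The sharper ``$=\alpha$'' refinement quoted after \eqref{eq:valid2} would follow by the same route once $h(U)\sim\unif(0,1)$ is verified for the canonical nested $\S$.
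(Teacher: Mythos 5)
This theorem is not proved in the paper at all: it is imported verbatim from \citet{imbasics} (``\citet{imbasics} gives the following result''), so there is no in-paper argument to compare against. Measured against the proof in that source, your proposal follows essentially the same two-stage route: first the reduction $\pl_X(A;\S)\ge \prob_\S\{\S\ni U\}$ for $\theta\in A$ under the coupling $X=a(\theta,U)$, which strips away $A$ and $\theta$ and leaves the single calibration statement $\prob_U\{h(U)\le\alpha\}\le\alpha$ for the contour function $h(u)=\prob_\S\{\S\ni u\}$; then the derivation of that calibration from P1--P2. The first stage is completely correct as you state it, and your observation that routing through \eqref{eq:plaus.fun} rather than \eqref{eq:plaus.fun.alt} avoids the closure in \eqref{eq:a.event} is accurate; in fact your argument does not even use the hypothesis $\Theta_x(u)\neq\varnothing$, which is needed only to keep the belief function (and hence the $u$-space representation) well behaved, not for the plausibility bound itself.

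The one place where your sketch is thinner than it admits is the final identification of $\bigcap_n S_n$ with $J_\beta=\bigcap\{S\in\SS:\prob_U(S)>\beta\}$ up to a null set. Total ordering gives you, for each individual $S$ in the family lying below every $S_n$, that $\prob_U\bigl(\bigcap_n S_n\setminus S\bigr)=0$ (all such $S$ have measure exactly $L$); but $\bigcap_n S_n\setminus J_\beta$ is the union of these differences over a possibly uncountable chain, and an uncountable increasing union of null sets need not be null. So P1(a)--(b) and continuity of measure alone do not quite close the lemma; one needs either an additional regularity assumption on the nested family (e.g., that it is indexed by a real parameter with $t\mapsto\prob_U(S_t)$ monotone and the relevant suprema attained, which holds for every $\SS$ actually constructed in this paper and in \citet{imbasics}), or a separate argument that the chain of measure-$L$ sets has an intersection of measure $L$. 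This is a technical caveat rather than a wrong turn --- you flagged the spot yourself --- but as written the ``short argument using P1(b) again'' is the step that still has to be supplied.
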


\citet{imbasics} show that a wide variety of predictive random sets are available for which P1--P2 hold, so that IM validity is rather easy to arrange.  However, efficiency is a concern and, for this, they present a theory of optimal IMs.

\section{P-value as an IM plausibility}
\label{S:pval}

\subsection{Main result}
\label{SS:main}

On the association $(a, \prob_U)$, the null hypothesis $\Theta_0$, and the test statistic $T: \XX \to \TT$, we assume the following.
\begin{itemize}
\item[A1.] For every $(x,u)$ there exists $\theta$ such that $T(x) = T(a(\theta,u))$.
\vspace{-2mm}
\item[A2.] $\sup_{\theta \in \Theta_0} T(a(\theta,\cdot))$ is a $\prob_U$-measurable function.
\vspace{-2mm}
\item[A3.] $\prob_U\{ \sup_{\theta \in \Theta_0} T(a(\theta,U)) < t\} = \inf_{\theta \in \Theta_0} \prob_U\{T(a(\theta,U)) < t\}$ for all $t \in \TT$.
\end{itemize}
Here, A2 ensures the meaningfulness of the probability statement in A3, and holds generally under mild separability and continuity conditions, respectively, on $\Theta_0$ and on $T$ and $a$, while A3 makes precise the stochastic smoothness and stochastic ordering $T(X)$ should possess as a function of $\theta$.  Assumptions~A2--A3 hold trivially for the important point-null case.  It is also easy to check A3 in many common examples, e.g., if $X_1,\ldots,X_n$ are iid $\nm(\theta,1)$, and $T(X) = \Xbar$, then $T(a(\theta,U)) = \theta + \Ubar$, and A3 holds for any $\Theta_0$ of the form $(-\infty, \theta_0]$.  

\begin{thm}
\label{thm:pval}
If \emph{A1--A3} hold for the given association $(a,\prob_U)$, hypothesis $\Theta_0$, and test statistic $T: \XX \to \TT$, then there exists an admissible predictive random set $\S$, depending on $T$ and $\Theta_0$, such that the plausibility function $\pl_x(\Theta_0;\S)$ equals $\pval(x)=\pval_{T,\Theta_0}(x)$ in \eqref{eq:pval} for all $x \in \XX$.  
\end{thm}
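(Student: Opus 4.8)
The plan is to reduce the assertion ``$\theta \in \Theta_0$'' to a one-sided statement about the scalar summary $g(u) = \sup_{\theta \in \Theta_0} T(a(\theta,u))$, and then to predict that summary with a one-sided nested random set whose upper-tail probability reproduces the p-value. The backbone is an identity that rewrites \eqref{eq:pval} purely in terms of the auxiliary variable. Since $X = a(\theta,U)$ gives $T(X) = T(a(\theta,U))$, each term in \eqref{eq:pval} equals $\prob_{X|\theta}\{T(X) \geq T(x)\} = \prob_U\{T(a(\theta,U)) \geq T(x)\}$, so that $\pval(x) = \sup_{\theta \in \Theta_0}\prob_U\{T(a(\theta,U)) \geq T(x)\}$. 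Taking complements in A3 and using A2 to ensure $g$ is $\prob_U$-measurable, this supremum collapses to a single probability, $\pval(x) = \prob_U\{g(U) \geq T(x)\}$. Thus the p-value is exactly the upper tail, under $\prob_U$, of the real random variable $W = g(U)$ evaluated at the observed level $T(x)$.

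Next I would construct the predictive random set promised in the statement. Because large $T$ counts against $H_0$, the natural object is the one-sided nested family of sublevel sets of $g$: take $\SS = \{\varnothing, \UU\} \cup \{\cl\{u : g(u) < t\} : t \in \TT\}$ and define $\prob_\S$ through P2. These sets are closed by construction, totally ordered by inclusion, and contain $\varnothing$ and $\UU$, so P1 holds, and P2 holds by definition; hence $\S$ is admissible and depends on the problem only through $T$ and $\Theta_0$, as required. Equivalently, $\S$ can be generated as $\cl\{u : g(u) < g(U^*)\}$ with $U^* \sim \prob_U$, so that the index of the drawn set is distributed as $W = g(U^*)$.

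Finally I would evaluate the plausibility and match it to the upper tail. Writing $B_x = \{u : \Theta_x(u) \cap \Theta_0 \neq \varnothing\}$, the combination step yields $\pl_x(\Theta_0;\S) = \prob_\S\{\S \cap B_x \neq \varnothing\}$, and I would argue that the growing set $\S$ first meets $B_x$ exactly when its index $W = g(U^*)$ crosses the level $T(x)$, whence $\pl_x(\Theta_0;\S) = \prob_U\{g(U^*) \geq T(x)\} = \pval(x)$. The inclusion $B_x \subseteq \{u : g(u) \geq T(x)\}$ is immediate, since $a(\theta,u)=x$ for some $\theta \in \Theta_0$ forces $g(u) \geq T(a(\theta,u)) = T(x)$; the real content is the matching bound on the crossing level, and this is where A1 enters, guaranteeing that the value $T(x)$ is attainable as $T(a(\theta,u))$ so that the boundary of $B_x$ sits at level $T(x)$ rather than strictly above it. As a consistency check, since $\S$ satisfies P1--P2, Theorem~\ref{thm:valid} applies and \eqref{eq:valid2} specializes to the familiar fact that $\pval(X)$ is stochastically no larger than $\unif(0,1)$ when $H_0$ is true.

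The step I expect to be the main obstacle is the last one: pinning the exact level at which $\S$ intersects $B_x$, and thereby \emph{identifying} the plausibility with the upper tail rather than merely bounding it. This forces one to control the interplay between the closures appearing in \eqref{eq:plaus.fun.alt}, the supremum in P2, and possible ties in the law of $W = g(U)$; the stochastic-smoothness content of A3 is what prevents the tail from jumping across $T(x)$, while A1 is what rules out a gap between $B_x$ and the sublevel sets of $g$. Everything upstream---the tail identity and the verification of P1--P2---should be routine by comparison.
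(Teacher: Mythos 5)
Your proposal is correct and follows essentially the same route as the paper's proof: you build the predictive random set from the nested closed sublevel sets of $g(u)=\sup_{\theta\in\Theta_0}T(a(\theta,u))$ with law given by P2, use A2 for measurability, A3 to collapse the supremum over $\Theta_0$ into a single $\prob_U$-tail probability, and A1 to pin the boundary of the relevant $u$-event at the level $T(x)$. The only differences are presentational (you compute the hitting probability $\prob_\S\{\S\cap B_x\neq\varnothing\}$ directly rather than its complement $1-\prob_\S\{\S\subseteq\UU_x(\Theta_0^c)\}$, and you apply A3 to the p-value upfront rather than inside the definition of $\prob_\S$), and the ``main obstacle'' you flag is exactly the step the paper resolves with its $\eps$-argument showing $S_{T(x)}$ is the largest $S_t$ contained in $\UU_x(\Theta_0^c)$.
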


\begin{proof}
Without loss of generality, we reduce the baseline association $X=a(\theta,U)$, with $U \sim \prob_U$, to $T(X) = T(a(\theta,U))$, again with $U \sim \prob_U$.  In this case, the A-step of the IM construction generates the collection of subsets:
\[ \Theta_x(u) = \{\theta: T(x) = T(a(\theta,u))\}, \quad x \in \XX, \quad u \in \UU. \]
These sets are non-empty for all $(x,u)$ by A1.  For the P-step, we define a collection $\SS=\{S_t: t \in \TT\}$ of subsets of $\UU$ with
\[ S_t = \cl\{u: \textstyle\sup_{\theta \in \Theta_0} T(a(\theta,u)) < t\}, \quad t \in \TT.  \]
The sets are closed, are nested, and $\prob_U$-measurability follows from A2.  Thus P1 in Theorem~\ref{thm:valid} holds.  Define a predictive random set $\S$, supported on $\SS$, with distribution $\prob_\S$ satisfying
\begin{equation}
\label{eq:t.star}
\prob_\S\{\S \subseteq K\} = \prob_U(S_{t_K^\star}) = \inf_{\theta \in \Theta_0} \prob_{X|\theta}\{T(X) < t_K^\star\}, \quad K \subseteq \UU, 
\end{equation}
where $t_K^\star = \sup\{t \in \TT: S_t \subseteq K\}$; the last equality in \eqref{eq:t.star} is a consequence of A3.  For such as $\S$, the corresponding IM is valid.  For notational consistency, set $A=\Theta_0$.  The C-step proceeds as in the general case in Section~\ref{SS:construct}, and, for any $x \in \XX$, the plausibility function \eqref{eq:plaus.fun.alt}, evaluated at $A$, satisfies
\begin{equation}
\label{eq:containment}
\pl_x(A; \S) = 1-\prob_\S\{\S \subseteq \UU_x(A^c)\} = 1-\prob_\S\{\S \subseteq S_{T(x)}\}. 
\end{equation}
The second equality in \eqref{eq:containment} needs justification.  First, we have $S_{T(x)} \subseteq \UU_x(A^c)$ since 
\begin{align*}
u \in S_{T(x)} & \implies T(x) > \textstyle\sup_{\theta \in A} T(a(\theta,u)) \\
& \implies T(x) = T(a(\theta,u)) \quad \exists \; \theta \not\in A \\
& \implies \Theta_x(u) \subseteq A^c \\
& \implies u \in \UU_x(A^c).
\end{align*}
It remains to show that $S_{T(x)}$ is the largest of the $S_t$'s contained in $\UU_x(A^c)$.  For any small $\eps > 0$, there exists $u \in S_{T(x)+\eps}$ such that $T(x) \leq \sup_{\theta \in A} T(a(\theta,u))$; for this $u$, $\Theta_x(u) \not\subseteq A^c$, so $u \not\in \UU_x(A^c)$.  We have verified \eqref{eq:containment}, so 
\begin{align*}
\pl_x(A; \S) & = 1-\prob_\S\{\S \subseteq S_{T(x)}\} \\
& = 1-\inf_{\theta \in A} \prob_{X|\theta}\{T(X) < T(x)\} \quad \text{[by \eqref{eq:t.star}]} \\
& = \sup_{\theta \in A} \prob_{X|\theta}\{T(X) \geq T(x)\}. 
\end{align*}
The right-hand side is $\pval(x)$ in \eqref{eq:pval}, completing the proof. 
\end{proof}

\begin{cor}
\label{co:pval}
Under \emph{A1}, if $\Theta_0=\{\theta_0\}$, then the conclusion of Theorem~\ref{thm:pval} holds.  
\end{cor}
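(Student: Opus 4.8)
The plan is to show that Corollary~\ref{co:pval} is essentially the point-null special case of Theorem~\ref{thm:pval}, so the whole task reduces to verifying that A2 and A3 become vacuous or automatic when $\Theta_0=\{\theta_0\}$. First I would observe that with a singleton null the supremum and infimum over $\Theta_0$ collapse: $\sup_{\theta \in \Theta_0} T(a(\theta,u)) = T(a(\theta_0,u))$, which is a $\prob_U$-measurable function of $u$ whenever the baseline map $u \mapsto T(a(\theta_0,u))$ is measurable---and this measurability is already implicit in the association $(a,\prob_U)$ being a legitimate data-generating mechanism, since it is exactly what makes $T(X)$ a random variable under $\prob_{X|\theta_0}$. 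Hence A2 holds with no extra hypothesis.

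Next I would dispatch A3. For a point null the statement in A3 reads $\prob_U\{T(a(\theta_0,U)) < t\} = \prob_U\{T(a(\theta_0,U)) < t\}$, a tautology, because both the inner $\inf_{\theta\in\Theta_0}$ and the $\sup_{\theta\in\Theta_0}$ inside the event reduce to the single value $\theta=\theta_0$. So A3 is satisfied identically. This is precisely the remark made in the paragraph preceding Theorem~\ref{thm:pval} that ``Assumptions~A2--A3 hold trivially for the important point-null case,'' and the corollary is simply recording that consequence formally.

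With A2 and A3 in hand, and A1 assumed in the corollary's hypothesis, all three premises of Theorem~\ref{thm:pval} are met for the given association, the singleton $\Theta_0=\{\theta_0\}$, and the test statistic $T$. I would then invoke Theorem~\ref{thm:pval} directly to conclude that there exists an admissible predictive random set $\S$ with $\pl_x(\Theta_0;\S) = \pval(x)$ for all $x \in \XX$. The predictive random set produced is the nested collection $S_t = \cl\{u : T(a(\theta_0,u)) < t\}$, and in this case the p-value \eqref{eq:pval} simplifies to the textbook form $\pval(x) = \prob_{X|\theta_0}\{T(X) \geq T(x)\}$, so the identity is exactly the familiar point-null statement.

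I do not anticipate a genuine obstacle here, since the corollary is a specialization rather than a new result; the only point requiring a moment's care is confirming that A2's measurability requirement is not vacuously stronger than what the point-null setting supplies. I would make explicit that the measurability of $u \mapsto T(a(\theta_0,u))$ is a standing regularity assumption inherent in treating $T(X)$ as a test statistic, so that A2 is automatic and the reduction to Theorem~\ref{thm:pval} is clean. The entire argument is therefore a one-line appeal to the main theorem once the triviality of A2--A3 is noted.
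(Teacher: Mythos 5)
Your proposal is correct and follows exactly the paper's own route: the paper's proof is the single line ``Conditions A2--A3 hold automatically for singleton $\Theta_0$ and suitable $T$,'' and your argument is simply a careful expansion of that observation (sup and inf over a singleton collapse, measurability of $u \mapsto T(a(\theta_0,u))$ is inherent in $T(X)$ being a statistic) followed by a direct appeal to Theorem~\ref{thm:pval}. Nothing further is needed.
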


\begin{proof}
Conditions A2--A3 hold automatically for singleton $\Theta_0$ and suitable $T$. 
\end{proof}

\subsection{Remarks}
\label{SS:remarks}


\citet[][p.~375]{dempster2008} points out a similar connection between plausibility and p-value; specifically, he shows numerically how Fisher's p-value can be decomposed into two pieces---one piece corresponding to belief in $H_0$ and the other corresponding to ``don't know''---the sum of which is our plausibility.  His example is for the standard test for a Poisson mean based on a one-sided alternative hypothesis, and he claims no such a correspondence in general.  

In the Bayesian setting, a search for ``objective'' priors often focuses on probability matching \citep[e.g.,][]{mghosh2011}, i.e., choose the prior such that the corresponding posterior tail probabilities and p-values are asymptotically equivalent.  Given the connection between p-values and IM plausibilities, these objective  Bayes posterior probabilities can also be interpreted as plausibilities.  This is perhaps not surprising given that objective Bayes posterior distributions can be viewed as a simple and attractive way to approximate frequentist p-values \citep{fraser2011}.  

This connection between p-values and plausibilities also casts light on the argument in \citet{schervish1996} concerning the use of p-values as measures of evidence; see, also, \citet{bergersellke1987}.  He shows that, in general, p-values fail to satisfy that, for a given $x$, if $\Theta_0' \subseteq \Theta_0$, then the p-value for $\Theta_0'$ should be no more than the p-value for $\Theta_0$.  Theorem~\ref{thm:pval} explains this lack of coherence in that p-values for different hypotheses may be plausibilities with respect to different IMs with different scales.  The same is true for Bayesian posterior probabilities for $\Theta_0'$ and $\Theta_0$ if different priors are used for each testing problem, which would not necessarily be unusual.    

In case $\Theta_x(u) = \varnothing$ for some pair $(x,u)$, constructing an IM with plausibility function matching the p-value cannot be done as described in the proof of Theorem~\ref{thm:pval}.  Such a situation arises, for example, in a normal mean problem $\nm(\theta,1)$ with $\Theta=[-1,1]$.  If $X=-1$ is observed, then $\Theta_{-1}(u) = \{\theta \in [-1,1]: -1 = \theta + \Phi^{-1}(u)\}$ is empty for $u > 1/2$.  For such problems, \citet{leafliu2012} present a modification of the IM approach that stretches the predictive random set just enough so that $\Theta_x(\S)$ is not empty while maintaining validity.  The result of this stretching is, in general, an increase in the plausibility function.  The p-value depends only on the null hypothesis, so is not affected by parameter constraints.  This is a shortcoming of the p-value, as evidence for a particular assertion should automatically become larger when the range of possible alternatives shrinks.

\subsection{Binomial example}
\label{SS:binomial}

Consider a binomial model, $X \sim \bin(n,\theta)$, where $n$ is a known positive integer and $\theta \in (0,1)$ is the unknown success probability.  Inference on $\theta$ in the binomial model is a basic problem that is far from trivial \citep[e.g.,][]{bcd2001}.  In this case, the natural association is 
\[ F_\theta(X-1) \leq U < F_\theta(X), \quad U \sim \unif(0,1), \]
where $F_\theta$ is the $\bin(n,\theta)$ distribution function.  There is no simple equation linking $(X, \theta, U)$ in this case, just a rule ``$X=a(\theta,U)$'' for producing $X$ with given $\theta$ and $U$.  We construct the p-value-based IM for a one-sided assertion/hypothesis.  

Consider $A=(0,\theta_0]$ for some fixed $\theta_0 \in (0,1)$.  If the null hypothesis is $H_0: \theta \in A$, then the uniformly most powerful test rejects $H_0$ in favor of $H_1: \theta \in A^c$ if and only if $T(X)=X$ is too large.  With this choice of $T$, for the A-step, we write 
\[ \Theta_x(u) = \{\theta: T(x) = T(a(\theta,u))\} = \{\theta: F_\theta(x-1) \leq u < F_\theta(x)\}. \]
If $G_{a,b}$ denotes the ${\sf Beta}(a,b)$ distribution function, then we may rewrite $\Theta_x(u)$ as 
\begin{align*}
\Theta_x(u) & = \{\theta: G_{n-x+1,x}(1-\theta) \leq u < G_{n-x,x+1}(1-\theta)\} \\
& = \{\theta: 1-G_{n-x+1,x}^{-1}(u) \leq \theta < 1-G_{n-x,x+1}^{-1}(u)\}.
\end{align*}
For the P-step, we construct the support $\SS = \{S_t: t \in \TT\}$, where, in this case, $\TT=\XX=\{0,1,\ldots,n\}$.  It is easy to see that 
\[ S_t = \cl\{u: \textstyle\sup_{\theta \in A} T(a(\theta,u)) < t\} = [F_{\theta_0}(t), 1]. \]
When equipped with the measure $\prob_\S$ in P2, determined by $\prob_U = \unif(0,1)$, the C-step produces a plausibility function for $A=(0,\theta_0]$, at the observed $X=x$, given by
\[ \pl_x(A;\S) = 1-F_{\theta_0}(x), \]
which is exactly the standard p-value for the one-sided test in a binomial problem.

\subsection{Normal variance example}
\label{SS:normal}

Consider a normal model, $\nm(\mu,\sigma^2)$, and a sequence of independent samples $X_1,\ldots,X_n$.  Here, $\theta=(\mu,\sigma^2)$ is unknown, but the goal is inference on $\sigma^2$, with $\mu$ a nuisance parameter.  Following the general conditioning principles in \citet{imcond}, we can focus on IMs determined by the minimal sufficient statistic, 
\[ \Xbar = \mu + \sigma n^{-1/2} Z \quad \text{and} \quad (n-1)S^2 = \sigma^2 W, \]
where $Z \sim \nm(0,1)$ and $W \sim \chisq(n-1)$, independent.  This association involves two auxiliary variables but, since the goal is inference about the scalar $\sigma^2$, we can reduce the dimension.  Write
\[ \Xbar = \mu + \frac{S}{n^{1/2}} \frac{Z}{\{W / (n-1)\}^{1/2}} \quad \text{and} \quad (n-1)S^2 = \sigma^2 W. \]
Since $\mu$ is a location parameter, it follows from \citet{immarg} that the first expression displayed above can be ignored, leaving the second as the marginal association for $\sigma^2$, which we now write as 
\[ T = \sigma^2 F^{-1}(U), \quad U \sim \unif(0,1), \]
where $T=(n-1)S^2$ and $F$ is the $\chisq(n-1)$ distribution function.  

Consider testing $H_0: \sigma^2 \leq \sigma_0^2$ versus $H_1: \sigma^2 > \sigma_0^2$.  For observed $T=t$, the standard test has p-value $\pval(t) = \prob\{T \geq t\} = 1 - F(t/\sigma_0^2)$.  It is straightforward to check that, with predictive random set $\S=[0,U)$, with $U \sim \unif(0,1)$, the plausibility function is 
\[ \pl_t(\{\sigma^2 \leq \sigma_0^2\}; \S) = \prob_\S\{\S \ni F(t/\sigma^2)\} = \prob\{U \geq F(t/\sigma_0^2)\} = 1-F(t/\sigma_0^2), \]
which is exactly the p-value.  Moreover, the predictive random set above is ``optimal'' in the sense of \citet[][Section~4.3.1]{imbasics}, which provides some IM-based explanation for this test being the standard one in the statistics literature.  

For an illustration, consider data presented in Problem~2-14 of \citet{montgomery2001} on the etch uniformity on silicon wafers taken during a qualification experiment.  In this case, the sample size is $n=20$ and the sample variance is $S^2=0.79$.  If $\sigma_0^2=1$ so the goal is testing if $\sigma^2 \leq 1$, the p-value is 0.72, and the null hypothesis is quite plausible.  More generally, we can plot the plausibility (or p-value) as a function of $\sigma_0^2$; see Figure~\ref{fig:pval}.  The horizontal line at $\alpha=0.1$ characterizes a 90\% plausibility lower bound for $\sigma^2$ defined by keeping all those $\sigma_0^2$ values with plausibility greater than 0.1; see \eqref{eq:plaus.region}.

\begin{figure}
\begin{center}
\scalebox{0.60}{\includegraphics{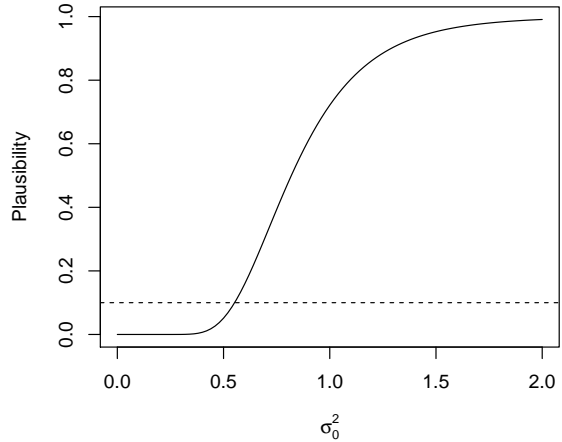}}
\end{center}
\caption{Plot of plausibility as a function of $\sigma_0^2$ in the normal variance example.}
\label{fig:pval}
\end{figure}

\section{Discussion}
\label{S:discuss}

We have developed a new user-friendly interpretation of the familiar but often misinterpreted p-value.  Specifically, we have shown that, for essentially any hypothesis testing problem, under mild conditions, there exists a valid IM such that its plausibility function, evaluated at the null hypothesis, is exactly the usual p-value.  This representation of p-values in terms of IM plausibilities casts light on a potential shortcoming of p-values that can arise in problems with non-trivial parameter constraints.  In such cases, it is not clear how to modify the p-value, while modifications of the IM plausibility are readily obtained via the methods described in \citet{leafliu2012}.


There are a numerous alternatives to p-value in the hypothesis testing literature, popular, at least in part, because of the difficulties in interpreting p-values.  For example, Jim Berger (and co-authors) have recommended converting p-values to Bayes factors, or posterior odds, for interpretation; for example, \citet{sellkebayarriberger2001} make a strong case for their suggested ``$-ep\log p$'' adjustment.  However, it is unlikely that p-values will ever disappear from textbooks and applied work, so compared to offering an alternative to the familiar p-value, it may be more valuable to offer a more user-friendly interpretation.  To borrow an analogy Larry Wasserman used on his blog:\footnote{\url{http://normaldeviate.wordpress.com/2012/07/11/} -- ``p-value police''} many people are poor drivers, but eliminating cars is not the answer to this problem. 

The connection between plausibility and p-value casts light on the nature of the IM output.  IM belief and plausibility functions are understood in \citet{imbasics} as measures of evidence given data.  The fact that, in some cases, plausibility and p-value match up is useful, suggesting that one could reason with IM plausibilities as with p-values.  The correspondence between plausibilities, p-values, and some objective Bayes posterior probabilities, and the fact that IMs contain the fiducial/Dempster--Shafer paradigms as special cases, suggests that the IM framework may in fact provide a unified perspective on robust, objective, probabilistic inference.

\section*{Acknowledgments}

The authors are grateful for helpful suggestions from the Editor, an Associate Editor, and two referees.  This work is partially supported by the U.S.~National Science Foundation, grants DMS--1007678, DMS--1208833, and DMS--1208841.

\bibliographystyle{apa}
\bibliography{/Users/rgmartin/Dropbox/Research/mybib}

\end{document}